\def\N{\mathbb N}
\def\A{\mathcal A}
\def\B{\mathcal B}
\def\LL{\mathcal L}
\def\uu{\mathbf{u}}
\def\aa{\mathbf{a}}
\def\bb{\mathbf{b}}
\def\vv{\mathbf{v}}
\def\colour{\mathrm{colour}}
\theoremstyle{definition}
\newtheorem{definition}{Definition}
\newtheorem{remark}[definition]{Remark}
\newtheorem{example}[definition]{Example}
\theoremstyle{plain}
\newtheorem{theorem}[definition]{Theorem}
\newtheorem{lemma}[definition]{Lemma}
\newtheorem{observation}[definition]{Observation}
\begin{document}

\title{Frequencies of letters in infinite $k$-balanced sequences}

\author[cvut]{Lubom\'ira Dvo\v r\'akov\'a}
\ead{lubomira.dvorakova@fjfi.cvut.cz}
\author[cvut]{Edita Pelantov\'a}
\ead{edita.pelantova@fjfi.cvut.cz}

\address[cvut]{FNSPE Czech Technical University in Prague, Czech Republic}

\date{September 2025}


\begin{abstract} Frequency of letters in a~symbolic sequence ${\bf u}$ over a finite alphabet is one of the basic characteristics of ${\bf u}$. The notion of $k$-balancedness captures the property that the number of any letter occurring in two arbitrary factors of ${\bf u}$ of equal length differs at most by $k$. For a fixed integer $k$ and alphabet size $d\in \mathbb N$, we discuss possible frequencies of letters in $k$-balanced $d$-ary sequences. For the size $d$ of the alphabet,  we introduce the notion of balancedness threshold $BT(d)$ and give an upper bound on it, where $BT(d)$ is the minimum $k$ such that there exists a $k$-balanced sequence over a $d$-letter alphabet for all possible letter frequencies.
\end{abstract}

\maketitle

\section{Introduction}
This paper is devoted to the study of the relation between frequencies of letters and $k$-balancedness in sequences (also called infinite words). Let us first introduce these notions.
Consider a sequence $\uu = u_0u_1u_2\cdots $ of symbols from a finite alphabet $\mathcal{A}=\{1,2,\ldots, d\}$.
The \textit{frequency of a~letter}~$a$ in $\uu$ is the limit (if it exists)
$$
f_a=\lim_{n\to \infty }\frac{\# \{i< n: u_i = a\}}{n}.
$$
If every letter $a \in \A$ has a~well-defined frequency in $\uu$, then $\sum_{a \in \A}f_a = 1$.
The vector $\vec{f}_\uu = (f_a)_{a\in \A}$ is called the \textit{frequency vector} of $\uu$.

\medskip
A \textit{word} $w=w_0w_1\cdots w_{n-1}$ over $\mathcal A$ is a finite sequence of letters $w_i$ from $\mathcal A$. Its \textit{length} $|w|$ equals $n$. To denote the number of occurrences of a letter $a$ in $w$, we use $|w|_a$. The set of all words over the alphabet $\mathcal A$ (including the empty word) is denoted $\mathcal A^*$.
A word $w$ is a  \textit{factor} of $\uu=u_0 u_1 u_2\cdots$ if there exists $i \in \mathbb N$ such that $w=u_i u_{i+1}\cdots u_{i+|w|-1}$.
We say that the sequence $\uu$ is recurrent if every factor of $\uu$ occurs in $\uu$ infinitely many times. The \textit{language} $\mathcal{L}(\uu)$ of a sequence $\uu$ is the set of factors occurring in $\uu$.
The \textit{factor complexity} of a sequence $\uu$ is the mapping ${\mathcal C}:\mathbb N \to \mathbb N$, where $${\mathcal C}(n)=\#\{w \in {\mathcal L}(\uu) \ : \ |w|=n\}\,.$$
We say that $\uu$ is $k$-\textit{balanced} if for any two factors $v, w$ of $\uu$ of the same length and for any letter $a\in \A$ holds $||v|_a-|w|_a|\leq k$. Obviously, a~$k$-balanced sequence is $K$-balanced for any $K>k$.

The study of $1$-balanced sequences over a~binary alphabet $\{a,b\}$ was initiated by Hedlund and Morse~\citep{MoHe40}.
They showed that $1$-balancedness requires some particular properties of the sequence. If a~$1$-balanced sequence is eventually periodic, then $f_a$ and $f_b$ are rational.  In the opposite case, a~$1$-balanced sequence is called \textit{Sturmian}. Hedlund and Morse proved that for each positive vector $(f_a,f_b)$, where $f_a+f_b = 1$, there exists a~$1$-balanced sequence with such letter frequencies.
The class of Sturmian sequences is the most studied class of sequences and there exist a lot of equivalent definitions of Sturmian sequences, see~~\citep{DvPeSt2010, Abelian2010, Vuillon2001}. These equivalent definitions enable to generalize Sturmian sequences to larger alphabets in many different ways, see~\citep{DvPeSt2010}.
On the one hand, one of the most usual generalizations are Arnoux-Rauzy sequences, however, it is known that the letter frequencies of any
Arnoux-Rauzy sequence belong to the Rauzy gasket~\citep{ArSt13}, which is a fractal set of Lebesgue measure zero.
On the other hand, for any given letter frequencies one can construct a sequence of sublinear factor complexity by coding a~$d$-interval exchange transformation.  It is however known~\citep{Zorich1997} that such generalization of Sturmian sequences to~$d$-ary alphabet is almost always unbalanced.  



$1$-balanced sequences over alphabets of size $d$ were described by Hubert~\citep{Hubert2000}. The description implies that for $d\geq 3$,  the frequency vector $(f_a)_{a\in\A}$ of $1$-balanced sequences takes only a particular form, see Lemma~\ref{lem:pouzeNeco}.
This fact motivates our definition of balancedness threshold for an alphabet of size $d$.

\begin{definition}
We say that $k\in \N$ is \textit{frequency restrictive} for $d$ if there exists a~positive vector $\vec{f} =(f_1,f_2, \ldots, f_d)$ with $\sum_{i\in \A} f_i=1$ such that no $k$-balanced $d$-ary sequence has the frequency vector~$\vec{f}$. \\
\textit{Balancedness threshold} $BT(d)$ is the minimum $k\in \N$ such that $k$ is not frequency restrictive for $d$.
\end{definition}
Obviously,  $BT(1)=0$. It follows from the result by Hedlund and Morse that $BT(2)=1$. In Lemma~\ref{lem:pouzeNeco} we explain why $BT(d)\geq 2$ for every $d\geq 3$. In~\citep{DMP} the sequence coding rectangle exchange transformation is used to prove $BT(3)=2$. Moreover, the factor complexity of such ternary sequences satisfies $\mathcal{C}(n) \leq  \alpha n^2\bigl(1+ o(1)\bigr)\,,$ for some parameter $\alpha \in (0,1)$. 
The equality $BT(3)=2$ follows also from the properties of ternary hypercubic billiard sequences. This class contains sequences of any given letter frequencies. They are $2$-balanced~\citep{AnVi} and under some additional condition on momentum, their factor complexity equals $n^2+n+1$, see~\citep{AR1994, ARTokyo1994, Be2007}.

The main contribution of this paper is the upper bound on balancedness threshold.

\begin{theorem}\label{thm:main} Let $d$ be a positive integer. Then $BT(d)\leq \lceil \log_2 d\rceil$.
\end{theorem}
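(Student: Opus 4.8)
The plan is to prove the bound by induction on $d$ through a divide-and-conquer (binary merging) construction, using the binary result of Hedlund and Morse as the engine at every level. Fix a positive frequency vector $\vec{f}=(f_1,\dots,f_d)$ with $\sum_i f_i=1$, and split $\A$ into two blocks $\B,\C$ with $|\B|=\lceil d/2\rceil$, $|\C|=\lfloor d/2\rfloor$. Put $p=\sum_{a\in\B}f_a$ and $q=\sum_{a\in\C}f_a=1-p$; both are positive since all $f_a>0$. By Hedlund--Morse there is a $1$-balanced \emph{master} sequence $\mathbf{m}$ over $\{1,2\}$ in which the letter $1$ has frequency $p$. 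By the induction hypothesis applied to the renormalised positive vectors $(f_a/p)_{a\in\B}$ and $(f_a/q)_{a\in\C}$ (each summing to $1$), there are sequences $\uu_1$ over $\B$ and $\uu_2$ over $\C$, with these frequencies, that are $k_1$- and $k_2$-balanced, where $k_1=\lceil\log_2|\B|\rceil$ and $k_2=\lceil\log_2|\C|\rceil$. I then define $\uu$ by \emph{order-preserving interleaving}: scanning $\mathbf{m}$ from the left, each occurrence of $1$ emits the next unused letter of $\uu_1$ and each occurrence of $2$ the next unused letter of $\uu_2$. The base case $d=1$ is the constant sequence, which is $0$-balanced.

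The heart of the argument is the balance bound for this interleaving, which I expect to be the main obstacle. The key structural fact is that any factor $v$ of $\uu$ of length $n$ projects canonically to a factor $m_v$ of $\mathbf{m}$ of length $n$ and to factors $v^{(1)}$ of $\uu_1$ and $v^{(2)}$ of $\uu_2$, with $|v^{(1)}|=|m_v|_1$, $|v^{(2)}|=|m_v|_2$, and $|v|_a=|v^{(i)}|_a$ for every letter $a$ in block $i$; the consecutiveness of $v^{(1)},v^{(2)}$ holds because the reading pointers into $\uu_1,\uu_2$ advance monotonically. Now take two factors $v,w$ of $\uu$ of equal length $n$ and a letter $a\in\B$. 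Since $m_v,m_w$ have the same length $n$ and $\mathbf{m}$ is $1$-balanced, $\bigl||m_v|_1-|m_w|_1\bigr|\le 1$, i.e.\ the lengths of $v^{(1)}$ and $w^{(1)}$ differ by at most $1$. If they are equal, $k_1$-balancedness of $\uu_1$ gives $\bigl||v|_a-|w|_a\bigr|=\bigl||v^{(1)}|_a-|w^{(1)}|_a\bigr|\le k_1$; if they differ by $1$, deleting the extra letter changes the count of $a$ by at most $1$, so the bound becomes $k_1+1$. The symmetric argument handles $a\in\C$ with bound $k_2+1$, whence $\uu$ is $(\max(k_1,k_2)+1)$-balanced.

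It remains to check that $\uu$ realises $\vec{f}$ and to unwind the recursion. Frequency existence is routine: $1$-balancedness of $\mathbf{m}$ forces a length-$N$ prefix to contain $pN+O(1)$ occurrences of $1$, so the number of occurrences of a fixed $a\in\B$ in that prefix equals the count of $a$ in a prefix of $\uu_1$ of length $pN+O(1)$, which by the inductive frequency guarantee is $(f_a/p)(pN+O(1))+O(1)=f_aN+o(N)$; dividing by $N$ yields frequency $f_a$, and similarly for $\C$. Finally, the construction produces, for every positive $\vec{f}$, a realising sequence of balance $\max(k_1,k_2)+1=\lceil\log_2\lceil d/2\rceil\rceil+1$ (using $|\B|\ge|\C|$). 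The elementary identity $\lceil\log_2\lceil d/2\rceil\rceil=\lceil\log_2 d\rceil-1$, valid for all $d\ge 2$ since $2^{m-1}<d\le 2^m$ forces $2^{m-2}<\lceil d/2\rceil\le 2^{m-1}$ with $m=\lceil\log_2 d\rceil$, then gives balance exactly $\lceil\log_2 d\rceil$. Hence $\lceil\log_2 d\rceil$ is not frequency restrictive for $d$, so $BT(d)\le\lceil\log_2 d\rceil$, completing the induction.
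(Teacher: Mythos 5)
Your proposal is correct and follows essentially the same route as the paper: the same halving of the alphabet, the same renormalisation of frequencies, a $1$-balanced binary master sequence used to interleave (the paper's ``colouring'' operation), and the same $+1$ balance lemma for the merge, finished by the identity $\lceil\log_2\lceil d/2\rceil\rceil=\lceil\log_2 d\rceil-1$. The only differences are cosmetic: you verify the ceiling identity directly rather than by contradiction, and you omit the factor-complexity bookkeeping the paper carries along, which is not needed for the theorem itself.
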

On top of it, we discuss factor complexity of $d$-ary sequences used in the proof of the above theorem. Getting a lower bound on $BT(d)$ is beyond our means.  It is not clear whether for some $d\in \N$ holds $BT(d)\geq 3$. The characterization of $2$-balanced sequences, which would be helpful for this purpose, is still missing.

\section{Frequencies and balancedness in fixed points of morphism}
In combinatorics on words, sequences with some required properties are usually looked up among morphic sequences. Let us recall what is known on letter frequencies and balancedness of such sequences. It clarifies why such sequences cannot be used in the proof of Theorem~\ref{thm:main}.
Given two alphabets $\A$, $\B$, then a~\textit{morphism} is a map $\psi: \A^* \to \B^*$ such that $\psi(uv) = \psi(u)\psi(v)$ for all words $u, v \in \A^*$, where $uv$ means concatenation of the words $u$ and $v$.
The morphism $\psi$ can be naturally extended to a~sequence $\uu=u_0 u_1 u_2\cdots$ over $\A$ by setting
$\psi(\uu) = \psi(u_0) \psi(u_1) \psi(u_2) \cdots\,$.

A~\textit{fixed point} of a morphism $\psi:  \A^* \to  \A^*$ is a~sequence $\uu$ such that $\psi(\uu) = \uu$.
We associate to a~morphism $\psi: \A^* \to  \A^*$ the \textit{incidence matrix} $M_\psi$ defined for each $i,j \in \{1,2,\dots, d\}$ as $(M_\psi)_{ij}=|\psi(j)|_{i}$.
A~morphism $\psi$ is \textit{primitive} if the matrix $M_\psi$ is primitive, i.e., there exists $k\in \mathbb N$ such that $M_\psi^k$ is a positive matrix.


In the sequel, we limit our consideration to the fixed points of primitive morphisms.
Frequencies of letters in any fixed point of a primitive morphism are given by the coordinates of a unique positive eigenvector of norm one corresponding to the spectral radius~\citep{Qu}.
Therefore, the letter frequencies belong to an algebraic field of order at most $d$. The same is true for morphic sequences, i.e., morphic images of fixed points.
The letters in any morphic sequence $\uu$ have the so-called \textit{uniform letter frequencies}~\citep{Qu}: for any sequence $(k_n)_{n \in \N}$ of non-negative integers, the limit
$$
\lim_{n\to \infty }\frac{\# \{k_n\leq i< k_n+n: u_i = a\}}{n}
$$
exists and is the same for any choice of $(k_n)_{n \in \N}$. The definition of letter frequency in the introduction is restricted to the study of the limit for the sequence $(k_n) = (0)$.

The relation of balancedness and uniform letter frequency was described by Berthé and Delecroix~\citep{BeDe}.
\begin{theorem} A~sequence $\uu$ over an alphabet $\mathcal A$ is $k$-balanced for some $k\in \mathbb N$ if and only if it has uniform letter frequencies
and there exists a~constant $B$ such that for any factor $w$ of $\uu$, we have $\Bigl||w|_a - f_a|w|\Bigr|\leq B$   for every letter $a\in
\A$.
\end{theorem}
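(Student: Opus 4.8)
The plan is to prove the two implications separately, noting that the reverse direction is nearly immediate while the forward direction carries the real content. For the reverse direction, suppose $\uu$ has uniform letter frequencies $(f_a)_{a\in\A}$ and admits a constant $B$ with $\bigl||w|_a - f_a|w|\bigr| \le B$ for every factor $w$ and every $a \in \A$. Given two factors $v, w$ with $|v| = |w| = n$, the triangle inequality yields
$$\bigl||v|_a - |w|_a\bigr| \le \bigl||v|_a - f_a n\bigr| + \bigl||w|_a - f_a n\bigr| \le 2B,$$
so $\uu$ is $k$-balanced with $k = \lfloor 2B\rfloor$.

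For the forward direction, I would fix a letter $a$ and, for each $n$, set $M_n = \max\{|w|_a : w \in \LL(\uu),\ |w| = n\}$ and $m_n = \min\{|w|_a : w \in \LL(\uu),\ |w| = n\}$. The first step is to observe that splitting a factor of length $p+q$ into its length-$p$ prefix and length-$q$ suffix---both again factors---gives $M_{p+q} \le M_p + M_q$ and $m_{p+q} \ge m_p + m_q$. Thus $(M_n)$ is subadditive and $(m_n)$ is superadditive, and Fekete's lemma supplies the limits
$$\lim_{n\to\infty}\frac{M_n}{n} = \inf_{n}\frac{M_n}{n} =: f_a^+ \qquad\text{and}\qquad \lim_{n\to\infty}\frac{m_n}{n} = \sup_{n}\frac{m_n}{n} =: f_a^-.$$

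The second step uses $k$-balancedness in the form $M_n - m_n \le k$: dividing by $n$ and letting $n \to \infty$ forces $f_a^+ = f_a^-$, a common value I call $f_a$. Since every factor $w$ of length $n$ satisfies $m_n \le |w|_a \le M_n$ and both bounds divided by $n$ tend to $f_a$, the squeeze argument shows $\frac{|w|_a}{n} \to f_a$ uniformly over all factors of length $n$, which is exactly the assertion that $\uu$ has uniform letter frequencies. Finally, the infimum/supremum characterization from Fekete's lemma gives $m_n \le f_a n \le M_n$ for every $n$, whence for any factor $w$ of length $n$,
$$|w|_a - f_a n \le M_n - m_n \le k \qquad\text{and}\qquad |w|_a - f_a n \ge m_n - M_n \ge -k,$$
so $B = k$ witnesses the boundedness condition.

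The argument is largely mechanical once the subadditive structure is identified; I expect the only genuine point of care to be the passage from convergence of $M_n/n$ and $m_n/n$ to the full uniform-frequency statement, i.e. verifying that the squeeze between $m_n$ and $M_n$ is precisely what the definition with arbitrary starting indices $(k_n)$ requires, since the count $\#\{k_n \le i < k_n+n : u_i = a\}$ is nothing but $|w|_a$ for the factor $w = u_{k_n}\cdots u_{k_n+n-1}$ of length $n$.
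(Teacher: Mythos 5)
The paper does not prove this statement at all---it is quoted from Berth\'e and Delecroix \citep{BeDe} as a known result---so there is no internal proof to compare against. Your argument is correct and self-contained, and it is essentially the standard proof of this equivalence. The easy direction via the triangle inequality is fine (and since $\bigl||v|_a-|w|_a\bigr|$ is an integer, $k=\lfloor 2B\rfloor$ indeed suffices). In the hard direction, the key points all check out: $M_{p+q}\le M_p+M_q$ and $m_{p+q}\ge m_p+m_q$ follow from splitting a factor into a prefix and a suffix; Fekete gives $\lim M_n/n=\inf_n M_n/n$ and $\lim m_n/n=\sup_n m_n/n$ (both finite since $0\le m_n\le M_n\le n$); $k$-balancedness is exactly $M_n-m_n\le k$, which collapses the two limits to a common value $f_a$; and the inf/sup characterizations give $m_n\le f_a n\le M_n$, hence $\bigl||w|_a-f_a|w|\bigr|\le M_n-m_n\le k$ with $B=k$. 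Your closing remark correctly identifies that the squeeze $m_n\le\#\{k_n\le i<k_n+n: u_i=a\}\le M_n$ is precisely what the definition of uniform letter frequencies with arbitrary starting indices $(k_n)$ demands, since that count is $|w|_a$ for the factor $w=u_{k_n}\cdots u_{k_n+n-1}$. I see no gap.
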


In general, proving that a~sequence $\uu$ is $k$-balanced for some $k$ may be a complicated problem. It is even more difficult to determine the minimum such $k$.
If $\uu$ is a~fixed point of a~primitive morphism, then it is possible to decide about $k$-balancedness using a result by Adamczewski~\citep{Ad}. It says that if all eigenvalues but one of the incidence matrix lie in the interior of the unit ball centered at origin, then $\uu$ is $k$-balanced. Moreover, an algorithm computing the minimum value $k$ is provided ibidem.

Since the letter frequencies in morphic sequences are always algebraic numbers, they cannot cover all possible candidates for $\vec{f}$.   

A very important generalization of morphic sequences is represented by $S$-adic systems, see for example~\citep{BeDe}. An $S$-adic system introduced by Cassaigne in~\citep{Ca} allows construction of ternary sequences with prescribed letter frequencies that are almost always $k$-balanced for some constant $k$, as proven in~\citep{CaLaLe}.

\section{Colouring of sequences}
In this section, we describe a construction that enables us to create sequences with prescribed letter frequencies.

\begin{definition} Let $\uu=u_0u_1u_2\cdots$ be a~sequence over the alphabet $\{a,b\}$.
Denote by $\mathcal{O}^{(a)}_n$ and $\mathcal{O}^{( b)}_n$ the $n^{th}$ occurrence of the letters $ a$ and $ b$ in $\uu$, respectively.
Let
${\bf a} = a_0a_1a_2\cdots$  and ${\bf b} = b_0b_1b_2\cdots$
 be two sequences over two disjoint alphabets $\mathcal{A}$  and $\mathcal{B}$, respectively. Colouring of $\uu$  by ${\bf a}$ and ${\bf b}$ is a sequence $\vv = v_0v_1v_2\cdots$ over $\mathcal{A}\cup \mathcal{B}$  such that for every   $N \in \N$   the $N^{th}$ entry of ${\bf v}$ is
$$v_N = \left\{ \begin{array}{cc} a_n,  & \text{if\ \ } N = \mathcal{O}^{(a)}_n;\\
b_n,  & \text{if\ \ } N = \mathcal{O}^{(b)}_n.
\end{array}\right. \quad$$
 We denote  ${\bf v}  = \colour(\uu, {\bf a},{\bf b}) $.
\end{definition}

Less formally: $\vv$ is obtained from the sequence $\uu$ over $\{ { a,b} \}$ by replacing the  letters ${ a}$'s in $\uu$ step by step by entries of the sequence $a_0a_1a_2 \cdots$ and analogously, the letters ${ b}$'s in $\uu$ are replaced by entries of the sequence $b_0b_1b_2\cdots$.

For ${\bf v}  = \colour(\uu, {\bf a},{\bf b})$ we use the notation $\pi(\vv) = \uu$ and $\pi(v) = u$ for any $v \in \LL(\vv)$ and the corresponding $u \in \LL(\uu)$.
We say that $\uu$ (resp. $u$) is a~\emph{projection} of $\vv$ (resp. $v$).
The map $\pi : \LL(\vv) \to \LL(\uu)$ is clearly a~morphism.
\begin{example}\label{ex:colouring} Let $\uu$ be a~sequence over $\{a,b\}$ and ${\bf a}=(121314)^{\omega}$ and ${\bf b}=(56)^{\omega}$, then
$$
\begin{array}{rcl}
\uu &=& aabaababaabaababaababaabaababaabaab\cdots \\

{\bf v}&=&12513615416215361451621531645126135  \cdots
\end{array}
$$
We have $\pi({54162153614})= baabaababaa$.
\end{example}

\begin{remark}\label{rem:freq}
Frequencies of letters in  $\vv = \colour(\uu, {\bf a}, {\bf b})$ can be easily computed from frequencies of letters in $\uu$, ${\bf a}$ and ${\bf b}$: if they exist.  For example, the letter $j \in \mathcal{A}$  has in $\vv$ the frequency $f_a\gamma$, where  $f_a$ is the frequency of the letter ${a}$ in $\uu$ and $\gamma$ is the frequency of the letter $j$ in ${\bf a}$.
\end{remark}

\begin{definition}
A~sequence $\aa$ is a \emph{constant gap sequence} if for each letter $a$ occurring in $\aa$ the distance between any consecutive occurrences of $a$ in $\aa$ is  constant.
\end{definition}
Example~\ref{ex:colouring} shows constant gap sequences $\aa$ and $\bb$.

The next result comes from~\citep{Newman}.
\begin{lemma}\label{lem:constant_gap_periods}
Let $\aa$ be a~constant gap sequence over an alphabet $\mathcal A$ containing more than one letter. Then $\aa$ contains two distinct letters having the same frequency.
\end{lemma}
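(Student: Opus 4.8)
The plan is to translate the combinatorial statement into the language of exact covering systems of the integers and then apply the classical generating-function argument of Davenport--Mirsky--Newman--Rado, which is exactly the source cited as \citep{Newman}. First I would set up the reformulation. Since $\aa$ has constant gaps and more than one letter (so that every occurring letter must appear infinitely often), each letter $c$ occurring in $\aa$ occupies a one-sided arithmetic progression of positions $\{p_c + k g_c : k \ge 0\}$, where $g_c$ is the constant gap of $c$ and $p_c$ is its first occurrence. These progressions partition $\{0,1,2,\dots\}$, and the frequency of $c$ in $\aa$ equals $1/g_c$. Hence two distinct letters share a frequency if and only if two distinct letters share a gap, and it suffices to prove that the maximal gap $D = \max_c g_c$ is attained by at least two letters. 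I would also record the easy fact that all gaps satisfy $g_c \ge 2$: the densities $1/g_c$ sum to $1$ over at least two letters, so no single $g_c$ can equal $1$. In particular $D \ge 2$.

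The key step is to encode the partition as a generating-function identity. On the open unit disk we have
\begin{equation*}
\frac{1}{1-z} \;=\; \sum_{n\ge 0} z^n \;=\; \sum_{c} \frac{z^{p_c}}{1 - z^{g_c}},
\end{equation*}
where the sum runs over the (finitely many) letters occurring in $\aa$, since the progressions $\{p_c + k g_c\}$ partition the exponents. I would then argue by contradiction: suppose the maximum $D$ is attained by a unique letter $c^{*}$, and examine the behaviour of both sides as $z$ approaches $\zeta = e^{2\pi i / D}$, a primitive $D$-th root of unity. Because $D \ge 2$, we have $\zeta \ne 1$, so the left-hand side $\frac{1}{1-z}$ is finite at $z=\zeta$. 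Every term with $g_c < D$ is also finite at $\zeta$, because $\zeta^{g_c} = 1$ would force $D \mid g_c$, impossible for $0 < g_c < D$. The single term $\frac{z^{p_{c^{*}}}}{1 - z^{g_{c^{*}}}}$ with $g_{c^{*}} = D$, however, has a simple pole at $z=\zeta$ with nonzero residue (the numerator $\zeta^{p_{c^{*}}}$ does not vanish, and $1 - z^{D}$ has a simple zero there). This contradicts the finiteness of the other side, so $D$ cannot be attained by a unique letter. Therefore two distinct letters share the gap $D$, hence the same frequency $1/D$, as claimed.

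The main obstacle I expect is not the analytic estimate itself but the clean reduction to a genuine partition of the exponents: one must check that each occurring letter really does fill an arithmetic progression (so that the generating-function identity is exact with no omitted or doubly-covered positions), and that every letter occurs infinitely often so that its gap is a well-defined finite integer and its density is positive. Once the covering is exact, the pole-cancellation argument is robust, the only delicate point being the verification that $\zeta^{g_c}\neq 1$ for all non-maximal moduli, which is precisely where the uniqueness assumption on $D$ is used and overturned.
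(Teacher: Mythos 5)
The paper gives no proof of this lemma---it is quoted directly from Newman~\citep{Newman} (\emph{Roots of unity and covering sets})---and your argument is precisely the classical Davenport--Mirsky--Newman--Rado proof that this citation points to: the occurrence sets of the letters form an exact cover of $\mathbb{N}$ by arithmetic progressions with moduli $g_c\geq 2$, and the simple pole of $z^{p_{c^*}}/(1-z^{D})$ at a primitive $D$-th root of unity, set against the boundedness there of $1/(1-z)$ and of every term with $g_c<D$, forces the largest modulus $D$ to be attained by at least two letters; this is correct. The only point worth making explicit is that ``constant gap sequence'' must be read as requiring each occurring letter to occupy a full one-sided arithmetic progression (hence to occur infinitely often with a well-defined finite gap), since otherwise a sequence such as $12^{\omega}$ would falsify the lemma itself; with that reading, which is clearly the one intended in the paper, your reduction to an exact covering system and the radial limit $z\to e^{2\pi i/D}$ from inside the unit disk are both sound.
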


\begin{theorem}[{\citep{Hubert2000}}]\label{thm:1balanced}
A recurrent aperiodic sequence $\vv$ is 1-balanced if and only if $\vv = \colour( \uu, \aa, \bb)$ for some Sturmian sequence $\uu$ and constant gap sequences ${\aa}, {\bb}$ over two disjoint alphabets.
\end{theorem}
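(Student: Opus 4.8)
The plan is to prove the two implications separately, and in both directions to reduce the balancedness of the multiletter word $\vv$ to the balancedness of its individual colour indicators. For a letter $c$ write $\chi_c=(\mathbf 1[v_i=c])_{i\ge 0}$ for its binary characteristic sequence. Since for any two equal-length factors $v,w$ of $\vv$ the number $|v|_c$ equals the number of $1$'s in the corresponding factor of $\chi_c$, the word $\vv$ is $1$-balanced if and only if every $\chi_c$ is a $1$-balanced binary sequence, i.e. (in the recurrent case) either purely periodic or Sturmian. This observation is the common backbone of both directions.

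For the implication ($\Leftarrow$) I would argue as follows. Let $\uu$ be Sturmian with $f_a=\alpha$ irrational, realized in mechanical form $u_i=\lfloor (i+1)\alpha+\rho\rfloor-\lfloor i\alpha+\rho\rfloor$ with $\rho\in[0,1)$, so that the number of $a$'s in the prefix $u_0\cdots u_{i-1}$ equals exactly $\lfloor i\alpha+\rho\rfloor$. Fix a colour $j\in\A$; in the constant gap sequence $\aa$ it occupies the ranks $\equiv r_j \pmod{g_j}$, where $g_j$ is its gap. Then $v_i=j$ exactly when $u_i=a$ (equivalently $\{i\alpha+\rho\}\in(1-\alpha,1)$) and the $a$-rank $\lfloor i\alpha+\rho\rfloor$ of position $i$ lies in $r_j+g_j\mathbb{Z}$. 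Combining the two conditions collapses them to the single condition $(i\alpha+\rho)\bmod g_j \in (r_j+1-\alpha,\; r_j+1)$. Hence $\chi_j$ is the coding of the rotation $t\mapsto t+\alpha$ on the circle $\mathbb{R}/g_j\mathbb{Z}$ by an interval of length $\alpha$; after rescaling by $1/g_j$ this is precisely a Sturmian (characteristic) sequence of slope $\alpha/g_j$, which is $1$-balanced. The colours in $\B$ are handled symmetrically, using that the $b$-rank in a prefix of length $i$ equals $\lceil i(1-\alpha)-\rho\rceil$. By the reduction above, $\vv$ is therefore $1$-balanced, and recurrence and aperiodicity are inherited from $\uu$ through the projection $\pi(\vv)=\uu$.

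The implication ($\Rightarrow$) is the substantial one. Starting from a recurrent aperiodic $1$-balanced $\vv$, the reduction shows each $\chi_c$ is balanced binary, hence mechanical of slope equal to its own frequency $f_c$; not all $\chi_c$ can be periodic, for otherwise $\vv$ would be eventually, hence by recurrence purely, periodic. The goal is to recover the data $(\A,\B,\uu,\aa,\bb)$. I would first show that the joint $1$-balance across letters (not merely letter by letter) forces all frequencies into two rational rays $\Q\alpha$ and $\Q(1-\alpha)$ for a single irrational $\alpha$, and define $\A=\{c:f_c\in\Q\alpha\}$, $\B=\{c:f_c\in\Q(1-\alpha)\}$; these are disjoint because $q\alpha=q'(1-\alpha)$ with $q,q'\in\Q$ would make $\alpha$ rational. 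One then sets $\uu=\pi(\vv)$ for the projection $\A\mapsto a$, $\B\mapsto b$ and proves that $\uu$ is a $1$-balanced aperiodic binary word, hence Sturmian with $f_a=\alpha$. Finally, reading $\vv$ against the mechanical normal form of $\uu$ as in the ($\Leftarrow$) computation, the positions of a colour $j\in\A$ among the $a$-slots must be exactly those of $a$-rank in a single residue class, i.e. the ranks of $j$ in $\aa$ form an arithmetic progression of step $\alpha/f_j$; this is the constant gap property of $\aa$, and symmetrically of $\bb$.

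The main obstacle is the rigidity extracted in the ($\Rightarrow$) direction: balancedness of the individual $\chi_c$ only yields mechanical sequences of a priori unrelated slopes $f_c$, and the whole difficulty is to use the joint balance of $\vv$ to pin the slopes into the two rays $\Q\alpha,\ \Q(1-\alpha)$ with a common irrational $\alpha$, and then to upgrade ``balanced distribution of colour $j$ among the $a$-slots'' to the exact arithmetic-progression (constant gap) pattern, ruling out balanced-but-non-periodic colourings. The projection step is the delicate point, since merging letters can in principle destroy $1$-balance (the naive bound on $\bigl||u|_a-|u'|_a\bigr|$ after merging is $|\A|$, not $1$); showing that the specific partition by frequency ray keeps $\pi(\vv)$ $1$-balanced is exactly where the finer structure of $1$-balanced words, rather than soft counting, must be invoked.
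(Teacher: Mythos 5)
The paper does not prove this statement at all --- it is quoted from Hubert \citep{Hubert2000} --- so your attempt can only be judged on its own merits. Your backbone reduction is correct: $\vv$ is $1$-balanced iff each characteristic sequence $\chi_c$ is a $1$-balanced binary sequence. Your ($\Leftarrow$) direction is essentially complete and is the standard argument: writing $\uu$ in mechanical form, the condition $v_i=j$ does collapse to $(i\alpha+\rho)\bmod g_j$ lying in an arc of length $\alpha$ on $\mathbb{R}/g_j\mathbb{Z}$, so $\chi_j$ is Sturmian of slope $\alpha/g_j$ and in particular $1$-balanced. One quibble: recurrence of $\vv$ is \emph{not} ``inherited from $\uu$ through the projection'' --- recurrence of $\pi(\vv)$ says nothing about $\vv$ a priori --- but it does follow from the rotation picture you already set up, since the rotation by irrational $\alpha$ on $\mathbb{R}/N\mathbb{Z}$ ($N$ the least common multiple of the periods of $\aa$ and $\bb$) is minimal.

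The genuine gap is the ($\Rightarrow$) direction, which is the entire substance of Hubert's theorem, and your write-up in effect concedes this. Three claims carry all the content and none receives an argument: (i) that joint $1$-balancedness forces every letter frequency into one of two rays $\Q\alpha$, $\Q(1-\alpha)$ for a single irrational $\alpha$; (ii) that merging letters along this frequency partition produces a $1$-balanced, hence Sturmian, projection $\uu$ --- and as you yourself observe, merging letters can in general inflate the balance constant up to $\#\A$, so soft counting cannot do this; (iii) that each colour $j$ occupies a full arithmetic progression of $a$-ranks, i.e.\ that ``$1$-balanced distribution of $j$ among the $a$-slots'' upgrades to constant gaps. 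On (iii) note that the mechanical normal form of $\chi_j$ alone is useless: $\chi_j$ Sturmian of slope $f_j$ says nothing about how the occurrences of $j$ sit among the $a$-positions, and a priori those ranks could form, say, a Beatty set of irrational density rather than a residue class; ruling this out is exactly the rigidity to be proved. Overcoming (i)--(iii) is where Hubert must invoke the fine combinatorics of balanced sequences, including a covering/partition argument in the spirit of Graham's theorem on partitioning $\N$ into Beatty sequences, none of which is reconstructed or replaced in your proposal. Naming the obstacles accurately, as you do in your final paragraph, is a useful roadmap but does not constitute a proof: as it stands, the forward implication remains entirely open.
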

Example~\ref{ex:colouring} shows a~$1$-balanced sequence $\vv$.

Using Remark~\ref{rem:freq}, we can describe the form of the frequency vector in $1$-balanced ternary and quaternary sequences.
\begin{observation}
Let $\vv=\colour(\uu, \aa, \bb)$ be a~1-balanced sequence over an alphabet $\{1,2,\dots, d\}$, where $\alpha$ is the frequency of $a$ in $\uu$. Then the frequency vector $\vec f_{\vv}$ takes on the following values.
\begin{enumerate}
\item For $d=3$, we have only one frequency vector $\vec f_\vv=(\alpha\frac{1}{2}, \alpha\frac{1}{2}, 1-\alpha)$ (up to some letter permutations) corresponding to $\aa=(12)^{\omega}$ and $\bb=(3)^{\omega}$.
\item For $d=4$, we have three possibilities (up to some letter permutations):
\begin{itemize}
\item if $\aa=(12)^{\omega}$, $\bb=(34)^{\omega}$, then $$\vec f_{\vv}=(\alpha\tfrac{1}{2}, \alpha\tfrac{1}{2}, (1-\alpha)\tfrac{1}{2}, (1-\alpha)\tfrac{1}{2})\,;$$
\item if $\aa=(123)^{\omega}, \bb=(4)^{\omega}$, then $$\vec f_{\vv}=(\alpha\tfrac{1}{3}, \alpha\tfrac{1}{3}, \alpha\tfrac{1}{3}, 1-\alpha)\,;$$
\item if $\aa=(1213)^{\omega}, \bb=(4)^{\omega}$, then $$\vec f_{\vv}=(\alpha\tfrac{1}{2}, \alpha\tfrac{1}{4}, \alpha\tfrac{1}{4}, 1-\alpha)\,.$$
\end{itemize}
\end{enumerate}
\end{observation}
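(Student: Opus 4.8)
The plan is to read off the frequency vector directly from the structural description of $1$-balanced sequences. First I would invoke Theorem~\ref{thm:1balanced}: since $\vv=\colour(\uu,\aa,\bb)$ is a $1$-balanced, recurrent, aperiodic sequence, the sequence $\uu$ is Sturmian over $\{a,b\}$ with frequencies $f_a=\alpha$ and $f_b=1-\alpha$, while $\aa$ and $\bb$ are constant gap sequences over disjoint alphabets $\A$ and $\B$. Because a Sturmian sequence contains both $a$ and $b$ infinitely often, both $\aa$ and $\bb$ are non-empty; and since $\vv$ uses exactly the letters of $\A\cup\B$ we have $|\A|+|\B|=d$. By Remark~\ref{rem:freq}, a letter of $\A$ has frequency $\alpha\gamma$ in $\vv$, where $\gamma$ is its frequency in $\aa$, and symmetrically a letter of $\B$ has frequency $(1-\alpha)\delta$. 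Thus the whole problem reduces to determining the possible multisets of letter frequencies in a constant gap sequence over $p$ letters for $p\in\{1,2,3\}$.

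Next I would classify these multisets. In a constant gap sequence a letter with gap $g$ sits on an arithmetic progression of common difference $g$, hence has frequency $\tfrac1g$ with $g\in\N$, and since every position carries exactly one letter the frequencies sum to $1$. For $p=1$ the only multiset is $\{1\}$. For $p=2$, Lemma~\ref{lem:constant_gap_periods} forces the two letters to share a frequency $f$, so $2f=1$ gives $\{\tfrac12,\tfrac12\}$, realized by $(12)^\omega$. For $p=3$, Lemma~\ref{lem:constant_gap_periods} again yields two equal frequencies, so the multiset is $\{\tfrac1m,\tfrac1m,\tfrac1n\}$ with $\tfrac2m+\tfrac1n=1$; solving over positive integers gives $n=m/(m-2)$, whence $m-2$ divides $2$ and $(m,n)\in\{(3,3),(4,2)\}$. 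This leaves exactly $\{\tfrac13,\tfrac13,\tfrac13\}$, realized by $(123)^\omega$, and $\{\tfrac12,\tfrac14,\tfrac14\}$, realized by $(1213)^\omega$.

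Finally I would assemble the frequency vector using Remark~\ref{rem:freq} and enumerate the splits $(|\A|,|\B|)$ of $d$ with both parts at least $1$. For $d=3$ the only split, up to exchanging the roles of $a$ and $b$ (which amounts to replacing $\alpha$ by $1-\alpha$ and permuting letters), is $(2,1)$, giving $\aa=(12)^\omega$, $\bb=(3)^\omega$ and the vector $(\alpha\tfrac12,\alpha\tfrac12,1-\alpha)$. For $d=4$ the splits are $(2,2)$ and $(3,1)$ together with its mirror $(1,3)$; feeding in the multisets found above produces the three listed vectors. The main obstacle is the $p=3$ classification: one must turn Lemma~\ref{lem:constant_gap_periods} together with the Diophantine relation $\tfrac2m+\tfrac1n=1$ into a complete finite list, and then confirm that each surviving candidate is genuinely realized by a concrete constant gap word, so that the enumeration is simultaneously exhaustive and sharp. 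The remaining split bookkeeping, and the observation that mirror splits coincide up to a letter permutation, is routine.
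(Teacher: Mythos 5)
Your proposal is correct and follows exactly the route the paper intends: the paper states this as an unproved Observation introduced by ``Using Remark~\ref{rem:freq}\dots'', i.e.\ it relies on Theorem~\ref{thm:1balanced} plus Remark~\ref{rem:freq} and an enumeration of constant gap sequences over at most three letters. Your write-up simply makes explicit the details the paper leaves implicit (the frequency $\tfrac1g$ of a letter with gap $g$, the Diophantine classification via $\tfrac2m+\tfrac1n=1$, and the split bookkeeping), and all of these steps check out.
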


The following lemma implies that $BT(d)\geq 2$ for $d\geq 3$.
\begin{lemma}\label{lem:pouzeNeco}
Let $\vv$ be a~$d$-ary 1-balanced sequence, where $\vv=\colour(\uu, \aa, \bb)$, $\uu$ is a Sturmian sequence over $\{a,b\}$ and $\aa$, $\bb$ are constant gap sequences over disjoint alphabets $\mathcal A$ and $\mathcal B$.
If $d\geq 3$, then $\vv$ contains two distinct letters of the same frequency.
\end{lemma}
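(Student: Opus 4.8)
The plan is to reduce the statement to Newman's result (Lemma~\ref{lem:constant_gap_periods}) via a counting argument on the sizes of the two colour alphabets. Since the letters of $\vv$ are partitioned into the alphabet $\mathcal A$ of $\aa$ and the alphabet $\mathcal B$ of $\bb$, we have $d = |\mathcal A| + |\mathcal B|$. As $d \geq 3$, at least one of these alphabets must contain more than one letter; by symmetry I may assume $|\mathcal A| \geq 2$.

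Applying Lemma~\ref{lem:constant_gap_periods} to the constant gap sequence $\aa$ then yields two distinct letters $i, j \in \mathcal A$ having the same frequency, say $\gamma$, in $\aa$. To transport this equality from $\aa$ to $\vv$ I would invoke Remark~\ref{rem:freq}: writing $\alpha = f_a$ for the frequency of $a$ in $\uu$, each letter $\ell \in \mathcal A$ has in $\vv$ the frequency $\alpha$ times its frequency in $\aa$. Hence both $i$ and $j$ have frequency $\alpha\gamma$ in $\vv$, so $\vv$ contains two distinct letters of the same frequency. The remaining case $|\mathcal B| \geq 2$ is handled identically, with $f_b = 1 - \alpha$ playing the role of $\alpha$.

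There is essentially no obstacle here: the mathematical content sits entirely in Newman's lemma, and the only things to verify are the elementary bookkeeping that $d \geq 3$ forces one colour class to have at least two letters, together with the fact that the multiplicative relation of Remark~\ref{rem:freq} sends equal frequencies in $\aa$ (resp.\ $\bb$) to equal frequencies in $\vv$. In particular, the $1$-balancedness hypothesis is never used directly beyond guaranteeing the colouring structure $\vv = \colour(\uu,\aa,\bb)$, which is already built into the statement.
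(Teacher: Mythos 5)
Your proposal is correct and follows essentially the same route as the paper's proof: since $d\geq 3$ forces $\#\mathcal A\geq 2$ or $\#\mathcal B\geq 2$, Newman's result (Lemma~\ref{lem:constant_gap_periods}) gives two distinct letters of equal frequency in $\aa$ or $\bb$, and Remark~\ref{rem:freq} transports this equality to $\vv$ via multiplication by $\alpha$ or $1-\alpha$. No meaningful difference from the paper's argument.
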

\begin{proof}
Denote $\alpha$ the frequency of the letter $a$ in $\uu$.
If $d\geq 3$, then either $\#\mathcal A\geq 2$ or $\#\mathcal B\geq 2$. Consequently, by Lemma~\ref{lem:constant_gap_periods} either $\aa$ or $\bb$ contains two distinct letters $i,j$ such that they have the same frequency in $\aa$, resp. $\bb$, say $\gamma$. Then by Remark~\ref{rem:freq}, $f_i=f_j=\alpha\gamma$, resp. $f_i=f_j=(1-\alpha)\gamma$ are the frequencies of letters $i$ and $j$ in $\vv$.
\end{proof}

As a consequence of Lemma\ref{lem:pouzeNeco}, we can see that the frequency vectors of 1-balanced sequences do not take on all possible values.

\begin{lemma}\label{plus1} Let  $\uu$ be an $\ell$-balanced sequence over $\{ a,b\}$, and
${\bf a} = a_0a_1a_2\cdots$  and ${\bf b} = b_0b_1b_2\cdots$
 be two $k$-balanced sequences over two disjoint alphabets $\mathcal{A}$ and $\mathcal{B}$, respectively. Then  ${\bf v} = \colour(\uu, {\bf a},{\bf b}) $ is $(k+\ell)$-balanced.
\end{lemma}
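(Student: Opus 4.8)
The plan is to verify the defining inequality of $(k+\ell)$-balancedness directly. Fix two factors $\hat v, \hat w \in \LL(\vv)$ of equal length $n$ and a letter $c \in \A \cup \B$; I must show $\bigl||\hat v|_c - |\hat w|_c\bigr| \le k+\ell$. By the symmetry between the two disjoint alphabets, I may assume $c \in \A$. Then the letters of $\hat v$ (resp.\ $\hat w$) coming from $\B$ never equal $c$, so only the $\A$-coloured positions contribute to the counts $|\hat v|_c$ and $|\hat w|_c$.

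The structural heart of the argument is the observation that the $\A$-coloured letters of $\hat v$, read from left to right, form a \emph{factor} of $\aa$. Indeed, if $\hat v$ occupies positions $N, N+1, \dots, N+n-1$ of $\vv$, the positions among these carrying an $\A$-letter correspond exactly to the occurrences of $a$ inside $u := \pi(\hat v)$; these are consecutive occurrences of $a$ in $\uu$, and by the definition of $\colour(\uu,\aa,\bb)$ they receive consecutive entries of $\aa$. Hence these letters spell a factor $s$ of $\aa$ of length $|s| = |u|_a$, and $|\hat v|_c = |s|_c$. Writing $t$ for the analogous factor of $\aa$ produced by $\hat w$, with $u' := \pi(\hat w)$ and $|t| = |u'|_a$, we likewise get $|\hat w|_c = |t|_c$. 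Thus the problem reduces to bounding $\bigl||s|_c - |t|_c\bigr|$ for two factors $s,t$ of the $k$-balanced sequence $\aa$.

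The difficulty is that $s$ and $t$ generally have \emph{different} lengths, so the $k$-balancedness of $\aa$ cannot be applied to them directly. This is where the $\ell$-balancedness of $\uu$ enters: since $\pi$ preserves length we have $|u| = |u'| = n$, so $\bigl||u|_a - |u'|_a\bigr| \le \ell$, i.e.\ $\bigl||s| - |t|\bigr| \le \ell$. Assuming without loss of generality $|s| \ge |t|$, I would split $s = s'r$, where $s'$ is the prefix of length $|t|$ and $r$ is a suffix of length $|s|-|t| \le \ell$. Now $s'$ and $t$ have equal length, so $\bigl||s'|_c - |t|_c\bigr| \le k$, while $|s|_c - |s'|_c = |r|_c \in \{0,1,\dots,\ell\}$ because $|r| \le \ell$. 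The triangle inequality then gives
$$\bigl||s|_c - |t|_c\bigr| \le \bigl||s|_c - |s'|_c\bigr| + \bigl||s'|_c - |t|_c\bigr| \le \ell + k,$$
and therefore $\bigl||\hat v|_c - |\hat w|_c\bigr| \le k+\ell$. The case $c \in \B$ is identical with $\bb$ in place of $\aa$. I expect the only genuinely delicate point to be the factor claim in the second paragraph (that the $\A$-letters of an arbitrary factor of $\vv$ form a factor of $\aa$, with the correct length $|u|_a$); once that is pinned down, the remaining prefix/suffix bookkeeping that transfers the length gap of at most $\ell$ into an additive error of at most $\ell$ is routine.
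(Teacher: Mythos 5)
Your proposal is correct and follows essentially the same route as the paper: reduce to the erased/projected factors of $\aa$ (which differ in length by at most $\ell$ thanks to the $\ell$-balancedness of $\uu$), compare the equal-length prefix via $k$-balancedness of $\aa$, and absorb the leftover suffix of length at most $\ell$ by the triangle inequality. The factor claim you flag as the delicate point is exactly the observation the paper makes via its erasing morphism $\pi_{\mathcal A}$, and it holds for the reason you give.
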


\begin{proof}
Let $u, v$ be factors of $\bf v$ of the same length. We want to prove that for each letter $c \in  \mathcal{A}\cup \mathcal{B}$
$$\bigl||u|_c-|v|_c\bigr| \leq k+\ell \,.$$
WLOG let $c \in \mathcal A$.
Denote $u'=\pi(u)$ and $v'=\pi(v)$. Clearly $|u'|=|v'|$. Thanks to $\ell$-balancedness of $\uu$, we have $\bigl||u'|_{a}-|v'|_{ a}\bigr|\leq \ell$.
Let $\pi_{\mathcal A}:(\mathcal{A}\cup \mathcal{B})^*\to {\mathcal{A}}^*$ be a morphism such that $\pi_{\mathcal A}(x)=x$ if $x \in \mathcal A$ and $\pi_{\mathcal A}(x)=\varepsilon$ if $x \in \mathcal B$.
It holds that for each factor $w$ of $\bf v$ the word $\pi_{\mathcal A}(w)$ is a factor of $\bf a$.
By definition of $\pi_{\mathcal A}$, we have $|u|_c=|\pi_{\mathcal A}(u)|_c, \ |v|_c=|\pi_{\mathcal A}(v)|_c$ and by definition of colouring $|\pi_{\mathcal A}(u)|=|u'|_{ a}, \ |\pi_{\mathcal A}(v)|=|v'|_{ a}$.

Since $|u'|_{a}$ and $|v'|_{a}$ differ at most by $\ell$, the words $\pi_{\mathcal A}(u)$ and $\pi_{\mathcal A}(v)$ are factors of $\bf a$ whose lengths differ at most by $\ell$.

WLOG assume $|\pi_{\mathcal A}(u)|=|\pi_{\mathcal A}(v)|+n$, where $0\leq n\leq \ell$. Then $\pi_{\mathcal A}(u)=a_i\cdots a_{i+m+n}$ and $\pi_{\mathcal A}(v)=a_j\cdots a_{j+m}$ for some $i,j,m \in \mathbb N$. Then using $k$-balancedness of $\bf a$ we get
   $ \bigl||\pi_{\mathcal A}(u)|_c-|\pi_{\mathcal A}(v)|_c\bigr| $
    $$\begin{array}{rcl}
    &\leq & \bigl||a_i\cdots a_{i+m}|_c-|a_j\cdots a_{j+m}|_c\bigr|+\\
    && \hspace{2cm} +|a_{i+m+1}\cdots a_{i+m+n}|_c \\
    &\leq & k+n\leq k+\ell\,.
    \end{array}$$
Since $|u|_c=|\pi_{\mathcal A}(u)|_c$ and $|v|_c=|\pi_{\mathcal A}(v)|_c$, we have proved that $\bigl||u|_c-|v|_c\bigr| \leq k+\ell$.
\end{proof}

\section{Proof of Theorem \ref{thm:main} }
In this section, we prove a~statement having Theorem~\ref{thm:main} as its direct consequence.
We make use of knowledge on the number of occurrences of letters in Sturmian sequences, provided in~\citep{Lothaire}.

\begin{lemma}\label{freqExists} Let $\uu$ be a~1-balanced sequence over the alphabet $\{a,b\}$ and $f_a =\alpha \in (0,1)$.  Then any factor $u$ of length $n \in \N$ either contains $\lceil \alpha n \rceil$ letters $a$ and $\lfloor (1-\alpha) n \rfloor$ letters $b$, or $u$  contains  $\lfloor  \alpha n \rfloor$ letters $a$  and  $\lceil (1-\alpha) n \rceil$ letters $b$.
\end{lemma}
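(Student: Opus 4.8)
The plan is to use $1$-balancedness to trap the number of $a$'s in any length-$n$ factor inside a window of two consecutive integers, and then to use the existence of the frequency $\alpha$ to locate that window precisely around $\alpha n$.

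First I would record the confinement step. By the definition of $1$-balancedness, any two factors $u,v \in \LL(\uu)$ with $|u|=|v|=n$ satisfy $\bigl||u|_a-|v|_a\bigr|\le 1$. Hence there is an integer $m=m(n)$ such that every factor of length $n$ has either $m$ or $m+1$ letters $a$; equivalently, $\{|u|_a : u\in\LL(\uu),\ |u|=n\}\subseteq\{m,m+1\}$. Next I would pin down $m$ by averaging. Writing the prefix of length $Nn$ as a concatenation of $N$ consecutive blocks of length $n$, each block is a factor of length $n$, so its number of $a$'s lies between $m$ and $m+1$; summing over the $N$ blocks and dividing by $Nn$ gives
\[
\frac{m}{n}\ \le\ \frac{\#\{i<Nn:\ u_i=a\}}{Nn}\ \le\ \frac{m+1}{n}.
\]
Letting $N\to\infty$ and using $f_a=\alpha$ yields $m\le \alpha n\le m+1$.

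If $\alpha n\notin\mathbb Z$, this forces $m=\lfloor\alpha n\rfloor$ and $m+1=\lceil\alpha n\rceil$. Since the number of $b$'s equals $n$ minus the number of $a$'s, the two admissible profiles are exactly $\bigl(\lceil\alpha n\rceil,\lfloor(1-\alpha)n\rfloor\bigr)$ and $\bigl(\lfloor\alpha n\rfloor,\lceil(1-\alpha)n\rceil\bigr)$, as claimed (here one uses $n-\lceil\alpha n\rceil=\lfloor(1-\alpha)n\rfloor$ and $n-\lfloor\alpha n\rfloor=\lceil(1-\alpha)n\rceil$).

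The remaining, and genuinely delicate, case is $\alpha n\in\mathbb Z$, where the squeeze only gives $m\in\{\alpha n-1,\alpha n\}$ and does not by itself forbid a factor whose $a$-count differs from $\alpha n$. Here I would invoke the structural dichotomy for $1$-balanced words: if $\uu$ is aperiodic it is Sturmian, its slope $\alpha$ is irrational, and so the value $\alpha n\in\mathbb Z$ never occurs; if $\uu$ is purely periodic with minimal period $t$, then $\alpha n\in\mathbb Z$ means $t\mid n$, and a window of length $n$ covers each residue class modulo $t$ the same number of times, hence contains exactly $\alpha n$ letters $a$ independently of its starting position. This is precisely where the occurrence counts for Sturmian words from~\citep{Lothaire} enter, and it is the main obstacle: $1$-balancedness together with the mere existence of $\alpha$ does not settle the integer boundary, so one must additionally use recurrence/minimality (which rules out skew behaviour such as $aa(ba)^{\omega}$, where the two-value case survives even though $\alpha n$ is an integer) together with the finer Sturmian structure.
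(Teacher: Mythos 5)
The paper does not actually prove this lemma: it is imported from \citep{Lothaire} as a known fact about occurrence counts in Sturmian sequences, so there is no internal argument to compare yours against. Your derivation is correct and essentially self-contained: the confinement of $|u|_a$ to two consecutive values $\{m,m+1\}$ follows from $1$-balancedness, the block-averaging squeeze $m\le\alpha n\le m+1$ correctly uses the assumed existence of $f_a=\alpha$, and the identities $n-\lceil\alpha n\rceil=\lfloor(1-\alpha)n\rfloor$ and $n-\lfloor\alpha n\rfloor=\lceil(1-\alpha)n\rceil$ convert the $a$-counts into the stated $b$-counts. More importantly, your handling of the boundary case $\alpha n\in\mathbb{Z}$ exposes a genuine imprecision in the statement itself: as you observe, the non-recurrent $1$-balanced sequence $aa(ba)^{\omega}$ has $f_a=\tfrac{1}{2}$ yet contains the factor $aa$, so for $n=2$ it admits a factor with $\lceil\alpha n\rceil+1$ letters $a$; the lemma is therefore false for arbitrary $1$-balanced sequences and implicitly requires $\uu$ to be Sturmian (irrational $\alpha$, so $\alpha n\notin\mathbb{Z}$ never occurs) or purely periodic (where your residue-class count settles the integer case). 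This does not damage the paper, since in the proof of Theorem~\ref{thm:freq+complexity} one may always choose such a $\uu$, but your proof both supplies the missing argument and flags the hypothesis that should accompany the statement.
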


\begin{theorem}\label{thm:freq+complexity} Let  $d \in \N$, $d\geq 1$, and $f(1),f(2), \ldots, f(d)$ be  positive numbers such that $f(1)+f(2)+\cdots + f(d)=1$. Then there exists an infinite sequence  ${\bf v}$ over the alphabet $\{1,2,\ldots,d\}$ such that
\begin{enumerate}
    \item the frequency of the letter $i$ in ${\bf v}$ is $f(i)$ for each $i\in \{1,2,\ldots, d\}$;
    \item ${\bf v}$ is $k$-balanced with $k =\lceil \log_2 d\rceil$;
    \item the factor complexity of $\vv$  satisfies $$\mathcal{C}_\vv(n)\leq (n+1)^{d-1}\,.$$
\end{enumerate}
\end{theorem}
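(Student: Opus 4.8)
The plan is to prove the three assertions simultaneously by strong induction on $d$, combining the colouring construction with a \emph{balanced} binary split of the alphabet, so that the recursion depth — and hence the balancedness constant accumulated through Lemma~\ref{plus1} — is exactly $\lceil\log_2 d\rceil$. For the base case $d=1$ take $\vv=1^\omega$: the single letter has frequency $f(1)=1$, the sequence is $0$-balanced with $\lceil\log_2 1\rceil=0$, and $\mathcal{C}_\vv(n)=1=(n+1)^0$. For $d\geq 2$ set $d_1=\lceil d/2\rceil$ and $d_2=\lfloor d/2\rfloor$, so $d_1+d_2=d$ and $d_1\geq d_2\geq 1$. Put $\alpha=f(1)+\cdots+f(d_1)$; since all frequencies are positive and both blocks are nonempty, $\alpha\in(0,1)$. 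By the result of Hedlund and Morse recalled in the introduction there is a Sturmian (hence $1$-balanced) sequence $\uu$ over $\{a,b\}$ with $f_a=\alpha$. Applying the induction hypothesis to the positive normalised vectors $\bigl(f(i)/\alpha\bigr)_{i\le d_1}$ and $\bigl(f(i)/(1-\alpha)\bigr)_{i>d_1}$ yields $\aa$ over $\{1,\dots,d_1\}$ and $\bb$ over $\{d_1+1,\dots,d\}$ realising these frequencies, with $\aa$ being $\lceil\log_2 d_1\rceil$-balanced of complexity $\le(n+1)^{d_1-1}$ and $\bb$ being $\lceil\log_2 d_2\rceil$-balanced of complexity $\le(n+1)^{d_2-1}$. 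The two alphabets are disjoint, so $\vv=\colour(\uu,\aa,\bb)$ is well defined.

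I would then check the three properties. For the \emph{frequencies}, a straightforward limit argument (combining $|u_0\cdots u_{N-1}|_a/N\to\alpha$ in $\uu$ with the existence of the letter frequencies in $\aa$ and $\bb$) shows that each frequency exists, and Remark~\ref{rem:freq} then gives frequency $\alpha\cdot(f(i)/\alpha)=f(i)$ for $i\le d_1$ and $(1-\alpha)\cdot(f(i)/(1-\alpha))=f(i)$ for $i>d_1$. For \emph{balancedness}, note that $\bb$, being $\lceil\log_2 d_2\rceil$-balanced with $d_2\le d_1$, is also $\lceil\log_2 d_1\rceil$-balanced; hence both $\aa$ and $\bb$ are $\lceil\log_2 d_1\rceil$-balanced, and Lemma~\ref{plus1} with $\ell=1$ shows $\vv$ is $\bigl(\lceil\log_2 d_1\rceil+1\bigr)$-balanced. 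The elementary identity $\lceil\log_2\lceil d/2\rceil\rceil+1=\lceil\log_2 d\rceil$, valid for all $d\ge 2$, turns this into exactly $\lceil\log_2 d\rceil$.

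The substantive step is the \emph{complexity} bound. The key point is that a factor $v\in\LL(\vv)$ with $|v|=n$ is determined by the triple $\bigl(u,\pi_{\mathcal A}(v),\pi_{\mathcal B}(v)\bigr)$, where $u=\pi(v)\in\LL(\uu)$ records the pattern of $a$'s and $b$'s, $\pi_{\mathcal A}(v)\in\LL(\aa)$ is the subword read off the $\mathcal A$-positions (of length $|u|_a$) and $\pi_{\mathcal B}(v)\in\LL(\bb)$ the subword read off the $\mathcal B$-positions (of length $|u|_b$); these projections land in the respective languages exactly as established in the proof of Lemma~\ref{plus1}. Reconstructing $v$ from the pattern and the two subwords is unique, so this map is injective and
\[
\mathcal{C}_\vv(n)\ \le\ \sum_{u\in\LL(\uu),\,|u|=n}\mathcal{C}_{\aa}(|u|_a)\,\mathcal{C}_{\bb}(|u|_b)\,.
\]
Since $|u|_a,|u|_b\le n$, the induction hypothesis bounds each summand by $(n+1)^{d_1-1}(n+1)^{d_2-1}=(n+1)^{d-2}$, and there are $\mathcal{C}_\uu(n)=n+1$ summands, whence $\mathcal{C}_\vv(n)\le(n+1)(n+1)^{d-2}=(n+1)^{d-1}$. (Lemma~\ref{freqExists} could be invoked to restrict $|u|_a$ to the two values $\lfloor\alpha n\rfloor,\lceil\alpha n\rceil$ and tighten the estimate, but the crude bound already suffices.)

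I expect the principal obstacle to be organisational rather than a single hard estimate: one must insist on the \emph{balanced} split $d=\lceil d/2\rceil+\lfloor d/2\rfloor$, since a lopsided split (e.g.\ $d=1+(d-1)$) would make the balancedness constant grow linearly in $d$; one must confirm the ceiling-logarithm identity that drives the balancedness bound; and one must justify rigorously that the frequencies in the coloured sequence exist before appealing to Remark~\ref{rem:freq}. The complexity injection is the crux of the argument, but once the triple parametrisation is in place the counting is immediate.
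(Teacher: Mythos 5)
Your proposal is correct and follows essentially the same route as the paper: the balanced split $d=\lceil d/2\rceil+\lfloor d/2\rfloor$, renormalised frequency vectors, colouring of a Sturmian sequence, Lemma~\ref{plus1} for balancedness, the identity $1+\lceil\log_2\lceil d/2\rceil\rceil=\lceil\log_2 d\rceil$ (which the paper proves explicitly, splitting into the even and odd cases, while you assert it), and the same injective parametrisation of factors of $\vv$ by their projection together with the induced factors of $\aa$ and $\bb$ for the complexity bound.
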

\begin{proof} We proceed by induction on $d$. If $d=1$, then we put ${\bf v} = 1^\omega$.

Let $d\geq 2$.  We denote $\alpha = f(1)+f(2) +\cdots +f(\lceil \frac{d}{2}\rceil)$ and
$$f'(i) =\left\{ \begin{array}{cl}\frac{1}{\alpha}\, f(i) &\text{ for } i=1,2, \ldots ,\lceil \frac{d}{2}\rceil\,;\\
& \\
\frac{1}{1-\alpha}\,f(i) &\text{ for }i =\lceil \frac{d}{2}\rceil +1, \ldots, d\,.
\end{array} \right.$$
 Obviously, $$ f'(1)+\cdots + f'(\lceil \tfrac{d}{2}\rceil)\ =\ 1\ =\  f'(\lceil \tfrac{d}{2}\rceil+1)+\cdots + f'(d).$$

By induction hypothesis, there exist a  $k_a$-balanced sequence ${\bf a} = a_0a_1a_2\cdots$ over the  alphabet $\mathcal{A} = \{1,2, \ldots, \lceil \tfrac{d}{2}\rceil\}$ with the frequencies of letters $f'(i)$ for each $i \in \mathcal{A}$  and $k_a = \lceil \log_2 \lceil \tfrac{d}{2}\rceil\rceil $ and a $k_b$-balanced sequence ${\bf b} = b_0b_1b_2\cdots$ over the  alphabet $\mathcal{B} = \{\lceil \tfrac{d}{2}\rceil +1, \ldots, d\}$ with the frequencies of letters $f'(i)$ for each $i \in \mathcal{B}$  and $k_b = \lceil \log_2 \lfloor \tfrac{d}{2}\rfloor\rceil $.

Let ${\bf u}$ be a $1$-balanced sequence over the alphabet $\{ a, b\}$ with frequencies of letters $\alpha$ and $1-\alpha$, respectively.
Then the sequence  ${\bf v} = \colour({\bf u},{\bf a},{\bf b})$  is over the alphabet $\{1, \ldots, d\}$, and  by Remark~\ref{rem:freq}, the frequencies of letters are $\alpha f'(i) = f(i)$ for $i \in \mathcal{A}$ and $(1-\alpha)f'(i) = f(i)$  for $i \in \mathcal{B}$.

Lemma \ref{plus1} implies that ${\bf v} = \colour({\bf u},{\bf a},{\bf b})$  is $k$-balanced, with $k= 1+\lceil \log_2 \lceil \tfrac{d}{2}\rceil\rceil $.
To complete the proof of Item 2, we have to show that
$1+\lceil \log_2 \lceil \tfrac{d}{2}\rceil\rceil \leq \lceil \log_ 2 d \rceil  $. If $d$ is even, then $1+\lceil \log_2 \lceil \tfrac{d}{2}\rceil\rceil  = \lceil \log_ 2 d \rceil  $.

For $d$ odd, we show the required inequality by contradiction. Assume that   $1+\lceil \log_2 \lceil \tfrac{d}{2}\rceil\rceil > \lceil \log_ 2 d \rceil  $. As $d$ is odd, we know that
$$\log_2 d < \lceil \log_2 d\rceil\leq \lceil \log_2 \lceil \tfrac{d}{2}\rceil\rceil=$$
$$\lceil \log_2  \tfrac{d+1}{2}\rceil=\lceil \log_2  (d+1)\rceil -1 < \log_2 (d+1) \,.$$
Therefore we have
$d<2^{\lceil \log_2 d\rceil}< d+1.$
The numbers $d, 2^{\lceil \log_2 d\rceil}$ and $d+1$ are integers, which leads to a~contradiction.

\medskip

To show Item 3, we proceed again by induction on $d$.
Let  $u$ be a fixed factor of length $n$ in the  $1$-balanced sequence $\uu$. The frequencies of letters in $\uu$ are $\alpha$ and $(1-\alpha)$. By Lemma \ref{freqExists}, the factor  $u$  either contains $\lceil \alpha n \rceil$ letters $a$ and  $\lfloor  (1-\alpha) n \rfloor$ letters $b$,  or $u$  contains  $\lfloor  \alpha n \rfloor$ letters $a$  and  $\lceil (1-\alpha) n \rceil$ letters $b$. Hence the  factor $u$  equals the  projection $\pi(v)$ for at most $\mathcal{C}_{\bf a}(\lceil \alpha n \rceil) \times\mathcal{C}_{\bf b}(\lceil (1-\alpha) n \rceil)$  factors $v$ in $\vv$.  Since $\uu$ has at most $n+1$ factors of length $n$, we have
$$\mathcal{C}_\vv(n)\leq (n+1) \, \mathcal{C}_{\bf a}(\lceil \alpha n \rceil) \,\mathcal{C}_{\bf b}(\lceil (1-\alpha) n \rceil).$$
Using the induction hypothesis and simple inequalities   $\lceil \alpha n \rceil \leq n$  and $\lceil (1-\alpha) n \rceil\leq n$, we conclude
$$ \mathcal{C}_\vv(n)\leq (n+1) \, \mathcal{C}_{\bf a}(n) \,\mathcal{C}_{\bf b}( n) \leq$$
$$\leq (n+1)\, (n+1)^{\lceil \tfrac{d}{2}\rceil-1} \, (n+1)^{\lfloor \tfrac{d}{2}\rfloor -1}  = (n+1)^{d-1}\,.
$$
\end{proof}

\section{Comments and questions}

\begin{enumerate}

    \item The bound we found on the factor complexity of the sequence $\vv$ constructed in the proof of Theorem~\ref{thm:freq+complexity} is not optimal. What is the optimal upper bound?

    \item  A $1$-balanced binary sequence $\uu$ is either Sturmian or periodic. If some ratio $f(i):f(j)$ in the assumptions of Theorem~\ref{thm:freq+complexity} is  rational,  we can reduce the degree of the polynomial in the upper bound $(n+1)^{d-1}$ at least by 1.

    \item When all frequencies $f(i)$ in the assumptions of Theorem~\ref{thm:freq+complexity} are rational, our construction gives a~periodic sequence $\vv$. How to determine its period?

    \item Given rational frequencies $f(i) = \frac{p_i}{q_i}$, how many steps are needed to construct a prefix of $\vv$ of length $N$?

\item A \emph{cubic billiard sequence in dimension} $d$ (also called hypercubic billiard sequence) is a~coding of the sequence of
the faces successively hit by a~billiard ball moving inside the unit hypercube $[0,1]^d$, where two
parallel faces are encoded by the same letter. These $d$-ary sequences are parameterized by the initial
position $x \in  [0, 1]^d$ and the initial momentum $\theta \in R^d \setminus \{0\}$ of the ball.
The vector of letter frequencies corresponds to the vector of initial momentum, up to a~dilatation, and a~change in the
signs of some components.
Under some additional condition on momentum, the factor complexity of cubic billiard sequences in dimension $d$ satisfies ${\mathcal C}(n)=n^{d-1}(1+o(1))$, see~\citep{Be09}. 
Vuillon~\citep{Vuillon} proved that any cubic billiard sequence in dimension $d$, whose momentum has rationally independent
components, is $d-1$ balanced. 
Andrieu and Vivion~\citep{AnVi} further specified that 
\begin{itemize}
\item for $d\in \{1,2,
\ldots, 4\}$,   any cubic billiard sequence in dimension $d$ generated by a momentum
with rationally independent components is not $d-2$ balanced; 
\item for $d\geq 5$,  cubic billiard sequences in dimension $d$ generated by a~momentum with
rationally independent components  are  $C$-balanced with the smallest $C \in \{3,4,\ldots, d-1\}$. 
\end{itemize}

\end{enumerate}

\section{Acknowledgements}
Both authors are aware of how significantly Prof. Havlíček influenced their scientific careers. As a teacher, as a co-author, and most importantly as a person who, in his role as dean of the faculty and head of the Department of Mathematics, encouraged his colleagues to pursue scientific work.



\end{document}